\newtheorem{theorem}{Theorem}
\newtheorem{thm}{Theorem}[section]
\newtheorem{lemma}[thm]{Lemma}
\newtheorem{claim}[thm]{Claim}
\newcommand{\footremember}[2]{%
    \footnote{#2}
    \newcounter{#1}
    \setcounter{#1}{\value{footnote}}%
}
\title{\vspace{-2em}Supercritical Site Percolation on the Hypercube: Small Components are Small}
\author{%
Sahar Diskin \footremember{alley}{School of Mathematical Sciences, Tel Aviv University, Tel Aviv 6997801, Israel. Email: sahardiskin@mail.tau.ac.il.}%
\and Michael Krivelevich \footremember{trailer}{School of Mathematical Sciences, Tel Aviv University, Tel Aviv 6997801, Israel. Email:
krivelev@tauex.tau.ac.il. Research supported in part by USA–Israel BSF grant 2018267.}%
}
\begin{document}
\maketitle
\vspace{-2em}
\begin{abstract}
    We consider supercritical site percolation on the $d$-dimensional hypercube $Q^d$. We show that typically all components in the percolated hypercube, besides the giant, are of size $O(d)$. This resolves a conjecture of Bollob\'as, Kohayakawa, and Łuczak from 1994.
\end{abstract}

\section{Introduction}
The $d$-dimensional hypercube $Q^d$ is the graph with the vertex set $V(Q^d)=\{0,1\}^d$, where two vertices are adjacent if they differ in exactly one coordinate. Throughout this paper, we denote by $n=2^d$ the order of the hypercube.

In bond percolation on $G$, one considers the subgraph $G_p$ obtained by including every edge independently with probability $p$. Erd\H{o}s and Spencer \cite{ES} conjectured that, similar to the classical $G(n,p)$ model, there is a phase transition at $p=\frac{1}{d}$ in $Q^d_p$: for $\epsilon>0$, when $p=\frac{1-\epsilon}{d}$, \textbf{whp} all components have size $O(d)$, and when $p=\frac{1+\epsilon}{d}$, there exists \textbf{whp} a unique giant component in $Q^d_p$ whose order is linear in $n$. Their conjecture was proved by Ajtai, Koml\'os, and Szemer\'edi \cite{AKS}. Bollob\'as, Kohayakawa, and Łuczak \cite{BKL2} improved upon that result, extending it to a wider range of $p$, allowing $\epsilon=o(1)$, and giving an asymptotic value of the order of the giant component of $Q^d_p$. Furthermore, they proved that the second largest component in $Q^d_p$ is typically of order $O(d)$.

In site percolation on $G$, one considers the induced subgraph $G[R]$, where $R$ is a random subset of vertices formed by including each vertex independently with probability $p$. In the setting of site percolation on the hypercube, Bollob\'as, Kohayakawa, and Łuczak \cite{BKL} proved the following:

\begin{thm}[Theorems 8, 9 of \cite{BKL}]\label{BKL}
\textit{Let $\epsilon>0$ be a small enough constant, and let $p=\frac{1+\epsilon}{d}$. Let $R$ be a random subset formed by including each vertex of $Q^d$ independently with probability $p$. Then, \textbf{whp}, there is a unique giant component of $Q^d[R]$, whose asymptotic size is $\frac{\left(2\epsilon-O(\epsilon^2)\right)n}{d}$. Furthermore, \textbf{whp} the size of the other components is at most $d^{10}$.}
\end{thm}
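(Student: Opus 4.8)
\medskip
\noindent\textit{Proof proposal.} The plan is to follow the standard template for establishing a supercritical phase, carried out in the geometry of the cube. The three ingredients are: (a) the local tree-likeness of $Q^d$ (balls of radius $r$ have size $\Theta(d^r)$ for bounded $r$), which lets one couple the component of a fixed vertex with a Galton--Watson process; (b) the isoperimetric inequalities of $Q^d$ — the edge-isoperimetric inequality $e(S,V\setminus S)\ge|S|(d-\log_2|S|)$ and Harper's vertex-isoperimetric inequality — which force every component to have a large external vertex boundary, all of it lying outside $R$; and (c) a two-round (sprinkling) exposure $R=R_1\sqcup R_2$ used to merge the surviving pieces into one giant. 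I would split the theorem into: (I) a branching-process estimate for $\Pr[\,|C(v)|\ge d^{10}\,]$ for a fixed $v$; (II) the claim that \textbf{whp} no component has size in the window $[d^{10},\beta n]$ for a small constant $\beta=\beta(\epsilon)$; and (III) existence and uniqueness of a component of size $>\beta n$. Given these, (II) and (III) produce a unique component $C_{\max}$ with every other component of size $<d^{10}$, and (I), together with a second-moment concentration for the number of vertices in components of size $\ge d^{10}$ (whose covariance terms are controlled by (III)), pins $|C_{\max}|$ at $(1+o(1))\rho\,pn=\frac{(2\epsilon-O(\epsilon^2))n}{d}$.

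For (I) I would condition on $v\in R$ and explore $C(v)$ by BFS, revealing membership in $R$ as each vertex is first seen. While the explored set has size at most $d^{1/3}$, the exploration is tree-like with probability $1-o(1)$: a departure requires two explored vertices at Hamming distance $2$, and the expected number of such pairs among the first $m$ revealed vertices is $O(m^2/d)=o(1)$. Hence up to that scale the explored set stochastically dominates, and is dominated by, the total progeny of Galton--Watson processes with offspring $\mathrm{Bin}(d-1,p)$ and $\mathrm{Bin}(d,p)$, both of mean $(1+o(1))(1+\epsilon)$; since $d^{1/3}\to\infty$, such a process reaches size $d^{1/3}$ before extinction with probability $(1+o(1))\rho$, where $1-\rho=e^{-(1+\epsilon)\rho}$. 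Inverting this as a power series in $\epsilon$ gives $\rho=2\epsilon-\tfrac{8}{3}\epsilon^2+O(\epsilon^3)=2\epsilon-O(\epsilon^2)$. Finally, once $|C(v)|\ge d^{1/3}$ the event $|C(v)|<d^{10}$ is absorbed by the (mildly strengthened) statement in (II) that there is no component of size in $[d^{1/3},\beta n]$, so $\Pr[\,|C(v)|\ge d^{10}\,]=\rho+o(1)$, and the expected number of vertices in such components is $(1+o(1))\rho\,|R|=(1+o(1))\rho\,pn=(2\epsilon-O(\epsilon^2))\tfrac{n}{d}$.

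Step (II) is the crux, and the main obstacle is that a naive union bound over connected subsets is too lossy: the number of connected $k$-subsets of $Q^d$ through a fixed vertex is as large as $(ed)^k$, which after multiplication by $p^k=(1+\epsilon)^kd^{-k}$ leaves an exponential factor $(e(1+\epsilon))^k$ that can be beaten only if a connected $k$-set is forced to have external boundary at least $\bigl(1-O(\epsilon^2)\bigr)kd$ — and this is \emph{false} for some connected sets (e.g.\ ``book''-like configurations that accumulate on a single vertex). The remedy is a \emph{refined} first moment: group connected $k$-subsets by the size $b$ of their external boundary, bound the number with boundary at most $b$ via a stability form of the isoperimetric inequalities in $Q^d$, and show $\sum_b \#\{S:|S|=k,\ |\partial S|=b\}\,p^k(1-p)^b=o(n^{-1})$ for $d^{10}\le k\le e^{cd}$; this is in essence the computation of \cite{BKL}, and it is exactly where the slack in the lattice-animal count makes the attainable threshold a polynomial in $d$ (here $d^{10}$) rather than $O(d)$, which is what the present paper improves. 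For $e^{cd}\le k\le\beta n$ the first moment is hopeless (there can be $\exp(\Omega(n))$ connected $k$-sets), and one instead uses the two-round exposure: apply (I)--(II) to the still-supercritical $Q^d[R_1]$; by Harper's inequality any two distinct components of $Q^d[R_1]$ of size $\ge e^{cd}$ have external boundaries of size $n/\mathrm{poly}(d)$; and sprinkling $R_2$ then connects them, or connects each of them to the largest component, \textbf{whp} — here one uses the structural fact that the largest component of $Q^d[R_1]$ is spread through the cube (it meets almost every subcube of co-dimension $O(\log d)$), so its boundary reaches everywhere. Applying the same sprinkling argument to two hypothetical components of size $>\beta n$ yields the uniqueness part of (III); existence follows since (I) supplies $(2\epsilon-O(\epsilon^2))n/d$ vertices in components of size $\ge d^{10}$ while (II) forbids sizes in $[d^{10},\beta n]$, forcing those vertices into components of size $>\beta n$.

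The genuine difficulty is thus concentrated in (II): the first-moment computation sits right at the threshold of convergence, so it requires the precise isoperimetric bookkeeping rather than any off-the-shelf bound, and the handover to the sprinkling argument for large $k$ relies on non-trivial structural information about the giant of $Q^d[R_1]$. By contrast the branching-process input (I) and the final second-moment concentration are comparatively routine.
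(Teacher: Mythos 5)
This statement is \emph{not proved in the paper}: it is quoted verbatim from Theorems~8 and~9 of Bollob\'as, Kohayakawa, and \L{}uczak~\cite{BKL} and used as a black box (it is invoked in Lemma~\ref{dense-giant-2}, Lemma~\ref{no-squid}, and the proof of Theorem~\ref{main-theorem}). So there is no in-paper proof to compare your attempt against; what follows is an assessment of the sketch on its own terms.

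Your architecture --- branching-process survival probability, an isoperimetry-driven first moment to kill the middle range of component sizes, and two-round sprinkling for existence and uniqueness of the giant --- is the standard one and is, in broad strokes, what \cite{BKL} do. The branching-process computation is right: with offspring close to $\mathrm{Poisson}(1+\epsilon)$ one gets $\rho=2\epsilon-\tfrac{8}{3}\epsilon^2+O(\epsilon^3)$, and $|C_{\max}|\approx\rho\,pn=(2\epsilon-O(\epsilon^2))n/d$. The observation that tree-like exploration holds up to scale $o(\sqrt d)$ (collisions come from $4$-cycles, with expected count $O(m^2/d)$) is also correct, and $d^{1/3}$ comfortably fits under that.

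Two substantive issues. First, your step (II) --- the refined first moment, grouping connected $k$-sets by boundary size and invoking a stability form of the vertex-isoperimetric inequality --- is precisely the hard core of \cite{BKL}, and your proposal does not carry it out; it explicitly defers to them. That is fine as an outline, but it means the proposal does not actually establish the $d^{10}$ bound or the size window. Second, there is a concrete error in the sprinkling step: you claim that by Harper's inequality any component of size $\ge e^{cd}$ has external boundary of size $n/\mathrm{poly}(d)$. This is false. For a set $A$ with $|A|=2^{c'd}$, $0<c'<1$, Harper's theorem gives a minimum vertex boundary on the order of $|A|/\mathrm{poly}(d)$, i.e.\ comparable to $|A|$ itself, not to $n$. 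Boundaries of order $n/\mathrm{poly}(d)$ appear only once $|A|=\Theta(n)$. Consequently the sprinkling argument as stated does not connect an $e^{cd}$-sized piece to the giant; one needs instead a structural ``everywhere dense'' statement about the $R_1$-giant (that it, or its neighbourhood, meets a positive fraction of every moderately large subcube), which is exactly the role of Lemma~\ref{dense-giant-2} in the present paper and is a nontrivial input you would have to supply. Relatedly, the split of (II) at $e^{cd}$ creates a logical gap: a component of $R=R_1\cup R_2$ of size in $[e^{cd},\beta n]$ need not contain a large component of $R_1$, so ruling it out by ``$R_1$-pieces merge under sprinkling'' requires more care than the proposal gives.
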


Motivated by the results in bond percolation on the hypercube, in the same paper \cite{BKL}, and also in their paper on bond percolation on the hypercube \cite{BKL2}, Bollobás, Kohayakawa, and Łuczak conjectured that \textbf{whp} all the components of $Q^d[R]$ besides the giant component are of asymptotic size at most $\gamma d$, where $\gamma$ is a constant depending only on $\epsilon$ (see Conjecture 11 of \cite{BKL}). We prove this conjecture:
\begin{theorem}\label{main-theorem}
Let $\epsilon>0$ be a small enough constant, and let $p=\frac{1+\epsilon}{d}$. Let $R$ be a random subset formed by including each vertex of $Q^d$ independently with probability $p$. Then, \textbf{whp}, all the components of $Q^d[R]$ besides the giant component are of size $O_{\epsilon}\left(d\right)$.
\end{theorem}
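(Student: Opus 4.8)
The plan is to upgrade the bound of $d^{10}$ in Theorem~\ref{BKL} to $O_\epsilon(d)$ by a two-stage argument. The starting point is that, by Theorem~\ref{BKL}, \textbf{whp} every component other than the giant has size at most $d^{10}$; call such components \emph{small}. It therefore suffices to show that \textbf{whp} there is no small component of size between $Cd$ and $d^{10}$, for a suitable constant $C=C(\epsilon)$. I would attempt this via a first-moment (union bound) argument over potential small components, but the naive count of connected subsets of $Q^d$ of size $k$ (which is roughly $k \cdot (ed)^{k}/k!$-ish, i.e. essentially $d^{O(k)}$) is far too large to beat the probability that such a set is both fully present in $R$ and isolated. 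The standard remedy is to not enumerate the component itself but to enumerate a sparse certificate of it — for instance, a well-chosen subtree or a collection of vertices that are "spread out" in the hypercube — together with the event that this certificate lies in $R$ and that its external vertex boundary is entirely \emph{absent} from $R$.

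Concretely, the key structural input is an isoperimetric-type statement: any connected subgraph of $Q^d$ on $k$ vertices with $Cd \le k \le d^{10}$ has vertex boundary of size $\Omega_\epsilon(k \cdot d)$ — more precisely $\Omega(kd)$ once $k=\omega(1)$, which follows from the edge-isoperimetric inequality on the hypercube together with the fact that in this regime the component is far from being a subcube (a subcube of dimension $\log_2 k$ has far fewer than $k$ vertices when $k \ll d$). Actually, a cleaner route in this size window: a connected set of size $k$ spans at least $k-1$ edges, touching at most $(k-1)d$ vertices, and one shows that a typical / worst-case connected set of size $k\le d^{10}$ has external boundary of size at least $c k d$ for an absolute constant $c$. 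Given such a boundary bound, if a fixed vertex set $S$ of size $k$ is a component of $Q^d[R]$ then $S\subseteq R$ and $\partial S \cap R = \emptyset$, an event of probability at most $p^k (1-p)^{ckd} \le p^k e^{-c' k}$ since $p d = 1+\epsilon$. Now I would choose the certificate to be a spanning tree $T$ of $S$ together with $S$ itself; enumerating $S$ costs at most (number of connected sets of size $k$ through a fixed vertex) $\le d^{O(k)}$, say $(\alpha d)^k$ for some constant $\alpha$, but this is multiplied by $n = 2^d$ to choose an anchor vertex, and we must also pay for the union over $k$.

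Putting the union bound together: the expected number of small components of size $k$ is at most
\[
n \cdot (\alpha d)^{k} \cdot p^{k} \cdot e^{-c' k}
= n \cdot \bigl(\alpha d p\, e^{-c'}\bigr)^{k}
= n \cdot \bigl(\alpha (1+\epsilon) e^{-c'}\bigr)^{k}.
\]
If $c'$ were large enough relative to $\alpha$ this geometric factor would be less than $1$ and summing over $k \ge C d$ would give $n \cdot q^{Cd} \to 0$ since $q<1$ and $Cd \gg d = \log_2 n$; the constant $C$ is then chosen so that $q^{Cd} = o(1/n)$. The crux, and the main obstacle, is that $\alpha$ (the exponential base of the count of connected subsets) and $c'$ (coming from the boundary bound and $p(1-p)^{\Theta(d)}$) are genuinely both constants of moderate size, and it is \emph{not} automatic that $\alpha(1+\epsilon)e^{-c'}<1$ — the hypercube has large degree, so connected sets are plentiful, and one must extract enough boundary to compensate. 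I expect this is exactly where the work lies and where the hypothesis that $\epsilon$ is a small enough constant is used: one needs a sufficiently strong isoperimetric/boundary estimate valid in the whole range $Cd \le k \le d^{10}$, likely split into sub-ranges ($Cd \le k \le d^2$, say, where crude counting suffices, versus larger $k$ up to $d^{10}$, where one may instead revisit the sprinkling/exploration argument of \cite{BKL} to see that components of size polynomial in $d$ cannot survive).

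If the direct first-moment bound does not close, the fallback plan is a sprinkling argument: write $p = p_1 + p_2$ with $p_2 = \delta/d$ small, expose $R_1$ first — by a version of Theorem~\ref{BKL} (or its proof) at density $p_1=\tfrac{1+\epsilon/2}{d}$ there is \textbf{whp} a giant plus components of size $\le d^{10}$ — and then show that adding $R_2$ \textbf{whp} either absorbs any component of size $\ge Cd$ into the giant or merges enough mass, using that a component of size $\ge Cd$ has boundary $\ge C c\, d^2$, so the probability it fails to connect to the giant via a sprinkled vertex with a further edge is at most $(1-p_2 \cdot \Theta(1))^{\Theta(d^2)}$, which again beats the $d^{O(\cdot)}$ enumeration. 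The technical heart in either approach is the same: a robust lower bound on the vertex boundary of connected subsets of $Q^d$ in the window $[Cd, d^{10}]$, and I would prioritise proving that cleanly first, as everything downstream is then a routine union-bound or sprinkling computation.
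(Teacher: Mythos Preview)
Your fallback sprinkling plan is the right framework and matches the paper's approach, but both of your plans share a genuine gap: the missing ingredient is not isoperimetry but a \emph{density-of-the-giant} statement. Concretely, in the sprinkling step you write that a component $B$ of size $\ge Cd$ has boundary $\Omega(d^2)$ and therefore ``fails to connect to the giant via a sprinkled vertex with a further edge'' with probability at most $(1-p_2\cdot\Theta(1))^{\Theta(d^2)}$. But a sprinkled boundary vertex connects $B$ to the giant only if that vertex is in, or adjacent to, the giant $L_1'$ from the first round. Nothing in your outline guarantees that any, let alone a constant fraction, of the $\Omega(d^2)$ boundary vertices are in $L_1'\cup N_{Q^d}(L_1')$; a priori $B$ could sit in a region of the hypercube where $L_1'$ is far away. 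This is exactly where the paper's work lies. Its key technical input is Lemma~\ref{dense-giant-2}: for any fixed set of $k\le \frac{\epsilon d}{10}$ vertices, the probability that \emph{none} of them has $\Omega_\epsilon(d)$ neighbours in large components (or their neighbourhoods) is at most $\exp(-\Omega_\epsilon(dk))$. This ``big components are everywhere dense'' property is what makes the sprinkling land near the giant, and it is proved by embedding disjoint subcubes around each vertex and using transitivity plus Theorem~\ref{BKL} inside each subcube---not by isoperimetry.

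The paper then runs your two-round exposure with $p_1=\frac{1+\epsilon/2}{d}$, partitions $V(Q^d)$ into $T=L_1'\cup N_{Q^d}(L_1')$, $M=\{v\notin T: |N_T(v)|\ge \epsilon^2 d/200\}$, and $S$ the rest. Lemma~\ref{dense-giant-2} feeds into Lemma~\ref{no-squid} to show that \textbf{whp} no connected set of size $Cd$ has $\ge \frac{\epsilon d}{20}$ vertices in $S$; so any non-giant component $B$ of size $\ge 2C_1d$ must have $|B\cap M|\ge C_1d$. A separate structural lemma (Lemma~\ref{big-neighbourhood}, ruling out a dense second neighbourhood via Lemma~\ref{no-sphere}) converts many edges into $T$ into $|N_T(B)|=\Omega_\epsilon(d^2)$, and only then does the sprinkling calculation you wrote go through. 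Your first-moment plan runs into the obstacle you already flagged (the base $e(1+\epsilon)e^{-c(1+\epsilon)}$ need not be $<1$ for the values of $c$ that a uniform vertex-isoperimetric bound actually gives in the window $k\le d^{10}$), and the paper does not attempt to rescue it; so your diagnosis that ``the technical heart is a robust vertex-boundary lower bound'' points in the wrong direction.
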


We note that in our proof we obtain an inverse polynomial dependency on $\epsilon$, of order $\frac{1}{\epsilon^5}$, in the hidden constant in $O_{\epsilon}(d)$.

The text is structured as follows. In Section $2$, we show that big components are 'everywhere dense' (see the exact definition in Lemma \ref{dense-giant-2}). In Section $3$, we exhibit several structures that \textbf{whp} do not appear in the site-percolated hypercube. In Section $4$, we prove Theorem \ref{main-theorem}.

Our notation is fairly standard. Throughout the rest of the text, unless specifically stated otherwise, we set $\epsilon$ to be a small enough constant, $p=\frac{1+\epsilon}{d}$, and $R$ to be a random subset of $V(Q^d)$ formed by including each vertex independently with probability $p$. Furthermore, we denote by $L_1$ the largest connected component of $Q^d[R]$. We denote by $N_G(S)$ the external neighbourhood of a set $S$ in the graph $G$. When $T$ is a subset of $V(G)$, we write $N_T(S)=N_G(S)\cap T$. We omit rounding signs for the sake of clarity of presentation.

\section{Big components are everywhere dense}
We begin with the following claim, which holds for any $d$-regular graph:
\begin{claim} \label{DFS-lemma}
\textit{Let $G=(V,E)$ be a $d$-regular graph on $n$ vertices. Let $\epsilon$ be a small enough constant, and let $p=\frac{1+\epsilon}{d}$. Form $R\subseteq V$ by including each vertex of $V$ independently with probability $p$. Then, \textbf{whp}, any connected component $S$ of $G[R]$ with $|S|=k>300\ln n$ has $|N_G(S)|\ge \frac{9kd}{10}$.}
\end{claim}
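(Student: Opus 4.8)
The plan is to run a depth-first search (DFS) of $G[R]$ in which the status of each vertex (whether it lies in $R$) is revealed only when the vertex is first visited, and then to analyse the resulting sequence of revealed bits. Maintain the usual three classes: a stack $\mathcal S$ (the current DFS path), a set $\mathcal D$ of finished vertices, and the set $\mathcal U$ of not-yet-touched vertices. Each time a vertex is touched we reveal its status; if it is in $R$ we push it onto $\mathcal S$, otherwise we move it directly to $\mathcal D$; when $\mathcal S$ empties we take a fresh vertex out of $\mathcal U$, reveal it, and continue, stopping once $\mathcal U=\emptyset$. Since every vertex is touched exactly once and its status is an independent coin flip that is exposed only at that moment, the sequence $\xi_1,\dots,\xi_n$ of revealed bits, listed in the order the vertices are touched, is a sequence of i.i.d.\ $\mathrm{Bernoulli}(p)$ variables.

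The structural heart of the argument is the following observation about a fixed component $S$ of $G[R]$ with $|S|=k$. Consider the block of consecutive touches during which $S$ is explored, i.e.\ from the moment $S$'s DFS root is touched until that root is popped from $\mathcal S$. Throughout this block, every touched vertex in $R$ gets pushed and hence belongs to $S$, while every touched vertex not in $R$ is examined from a stack vertex of $S$ and hence belongs to $N_G(S)$. Thus this block contains exactly $k$ ``heads'' (the vertices of $S$) and at most $|N_G(S)|$ ``tails'' (a subset of $N_G(S)$), so its length is at most $k+|N_G(S)|$. In particular, if $|N_G(S)|<\frac{9kd}{10}$, then reading the bits from the start $s$ of this block, the $k$-th one appears among the first $m:=\big\lceil k\big(1+\tfrac{9d}{10}\big)\big\rceil$ of them. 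Hence, if some component of size $k>300\ln n$ violates the conclusion, there is a position $s\in\{1,\dots,n\}$ with $\xi_s+\dots+\xi_{s+m-1}\ge k$, and a union bound over $s$ and over $k$ gives
\[
\Pr\big[\exists\,S\colon |S|=k>300\ln n,\ |N_G(S)|<\tfrac{9kd}{10}\big]\ \le\ \sum_{k>300\ln n} n\cdot\Pr\big[\mathrm{Bin}(m,p)\ge k\big].
\]

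To estimate the right-hand side, note that $pm=(1+\epsilon)k\big(\tfrac1d+\tfrac{9}{10}\big)$, so for $d$ large and $\epsilon$ a small enough constant (it suffices that $(1+\epsilon)\cdot\frac9{10}<1$, i.e.\ $\epsilon<\frac19$) the mean $pm$ is a constant factor below $k$, and $\{\mathrm{Bin}(m,p)\ge k\}$ is a constant-factor upper-tail event. The sharp Chernoff bound $\Pr[\mathrm{Bin}(m,p)\ge k]\le\exp\!\big(-m\,D(k/m\,\|\,p)\big)$, with $D(a\|p)=a\ln\frac ap+(1-a)\ln\frac{1-a}{1-p}$, then yields $\Pr[\mathrm{Bin}(m,p)\ge k]\le e^{-c_\epsilon k}$ for a constant $c_\epsilon>0$; a direct computation shows $c_\epsilon\to -\ln\frac{9}{10}-\frac1{10}\approx 5.4\cdot10^{-3}$ as $\epsilon\to0$. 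Therefore the sum is at most $\sum_{k>300\ln n}n\,e^{-c_\epsilon k}=O\!\big(n^{\,1-300c_\epsilon}\big)=o(1)$, since $300c_\epsilon>1$ once $\epsilon$ is small enough. This is exactly the probability of the bad event, so we are done.

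The delicate part, and the reason for the precise constants in the statement, is entirely quantitative: the upper-tail exponent $c_\epsilon$ is only a small constant (of order $10^{-2}$ even at $\epsilon=0$), so one must use the sharp Binomial tail estimate rather than the crude $e^{-\delta^2\mu/3}$ form, and the numbers $300$ and $\frac9{10}$ are calibrated precisely so that $300c_\epsilon>1$ for all sufficiently small $\epsilon$ (this is also where the hypothesis that $\epsilon$ be a small enough constant is used). The remaining points are routine: one should handle the rounding in $m$ and the off-by-one at the block endpoints, and observe that the DFS does touch every vertex (isolated vertices outside $R$ are simply moved to $\mathcal D$ when the stack is empty), but none of this affects the estimate.
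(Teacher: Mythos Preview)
Your proof is correct and follows essentially the same DFS-based approach as the paper: reveal vertices along a DFS, observe that the exploration of a component $S$ of size $k$ occupies a contiguous run of at most $k+|N_G(S)|$ bits containing exactly $k$ successes, bound the resulting binomial tail, and union-bound over the starting position and over $k$. If anything, your treatment of the tail constant via the KL divergence is more careful than the paper's, which simply asserts the bound $\exp(-k/100)$ and cites a generic Chernoff inequality.
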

\begin{proof}
We run the Depth First Search (DFS) algorithm with $n$ random bits $X_i$, discovering the connected components while generating $G[R]$ (see \cite{site-percolation}). If there is a connected component $S$ of size $k$, then there is an interval of random bits whose length is $k+|N_G(S)|$ in the DFS where we receive $k$ positive answers. Thus, the probability of having a component violating the claim, whose discovery by the DFS starts with a given bit $X_i$ is at most:
\begin{align*}
    P\left[Bin\left(\frac{9kd}{10}+k,\frac{1+\epsilon}{d}\right)\ge k\right]
\end{align*}
Hence, by a Chernoff-type bound (see Appendix A in \cite{probablistic method}), the probability of having such a component is at most
\begin{align*}
    \exp\left(-\frac{k}{100}\right)\le o\left(\frac{1}{n^2}\right),
\end{align*}
where the inequality follows from our bound on $k$. We complete the proof with a union bound on the $<n$ possible values of $k$ and $< n$ possible starting points of the interval in the DFS.
\end{proof}

We further require the following simple claim:
\begin{claim} \label{seperating}
Let $S\subseteq V(Q^d)$ such that $|S|=k\le d$. Then, there exist pairwise disjoint subcubes, each of dimension at least $d-k+1$, such that every $v\in S$ is in exactly one of these subcubes. 
\end{claim}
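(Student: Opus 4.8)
The plan is to prove the claim by induction on $k$. In the base case $k = 1$, writing $S = \{v\}$, one takes the single subcube $Q^d$ itself: it has dimension $d = d - k + 1$ and contains $v$, and there is nothing to verify about disjointness.

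For the inductive step, I would fix $2 \le k \le d$ and assume the statement for all smaller cardinalities, in hypercubes of arbitrary dimension. Since the $k$ vertices of $S$ are distinct, there is some coordinate $i$ on which they do not all agree. Splitting $Q^d$ along coordinate $i$ yields two disjoint $(d-1)$-dimensional subcubes $Q_0$ and $Q_1$, and $S$ decomposes as $S = S_0 \cup S_1$ with $S_j = S \cap Q_j$; by the choice of $i$, both $S_0$ and $S_1$ are nonempty. Writing $k_j = |S_j|$, we have $k_0, k_1 \ge 1$ and $k_0 + k_1 = k$, and hence $k_0, k_1 \le k - 1 \le d - 1$, so the induction hypothesis applies inside each $Q_j$ (regarded as a $(d-1)$-dimensional hypercube).

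Invoking it produces, for each $j \in \{0, 1\}$, pairwise disjoint subcubes of $Q_j$, each of dimension at least $(d-1) - k_j + 1 = d - k_j$, such that every vertex of $S_j$ lies in exactly one of them. Since $k_j \le k - 1$, each of these subcubes has dimension at least $d - k + 1$, and the subcubes coming from $Q_0$ are automatically disjoint from those coming from $Q_1$ because $Q_0 \cap Q_1 = \emptyset$. The union of the two families is therefore the desired collection. This argument is essentially routine; the only point requiring attention is the dimension bookkeeping in the recursion, which works out precisely because a nontrivial split sends at least one vertex of $S$ to each side, so that $k_0$ and $k_1$ are both at most $k - 1$ — exactly the slack the induction needs.
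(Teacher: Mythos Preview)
Your proof is correct and follows essentially the same induction as the paper's: split along a coordinate on which not all vertices of $S$ agree, recurse on both $(d-1)$-dimensional halves, and check the dimension bound. Your bookkeeping of the sizes $k_0,k_1$ and the resulting lower bound $d-k_j \ge d-k+1$ is in fact slightly more explicit than the paper's presentation, but the argument is the same.
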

\begin{proof}
We prove by induction on $k$. The case $k=1$ is trivial.

Assume that the statement holds for all $k'<k$. Choose any two vertices of $S$. There is at least one coordinate on which they do not agree. Consider the two pairwise disjoint (and complementary) subcubes of dimension $d-1$: one where we fix this coordinate to be $0$ and let the other coordinates vary, and the other where we fix this coordinate to be $1$ and let the other coordinates vary. Clearly, each of the two vertices is in exactly one of these subcubes. Since these subcubes are disjoint, no vertex from the other $k-2$ vertices is in both of them. We can then apply the induction hypothesis on each of these subcubes, giving rise to pairwise disjoint subcubes of dimension at least $d-1-(k-2)=d-k+1$, each having exactly one of the vertices of $S$.
\end{proof}

We conclude this section with a lemma, bounding the probability that a fixed small set of vertices has no vertex at Hamming distance $1$ from many vertices in large components or in their neighbourhoods. The proof here borrows several ideas used in \cite{AKS, BKL, EKK}.
\begin{lemma}\label{dense-giant-2}
Fix $S\subseteq V(Q^d)$ such that $|S|=k\le \frac{\epsilon d}{10}$. Then, the probability that every $v\in S$ has less than $\frac{\epsilon^2d}{40}$ neighbours (in $Q^d$) in components of $Q^d[R]$ whose size is at least $n^{1-\epsilon}$ or in their neighbourhoods in $Q^d$ is at most $\exp\left(-\frac{\epsilon^2dk}{40}\right)$.
\end{lemma}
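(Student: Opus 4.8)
The plan is to separate the $k$ vertices of $S$ into pairwise disjoint subcubes — turning the statement into $k$ independent one-vertex problems — and then, inside each subcube, to exhibit $\Omega(\epsilon^2 d)$ neighbours of the vertex that are adjacent to a large component, via a two-round exposure of the randomness. For the reduction: since $k\le d$, Claim~\ref{seperating} gives pairwise disjoint subcubes $Q^{(1)},\dots,Q^{(k)}$, each of dimension $d'\ge d-k+1\ge(1-\epsilon/10)d$, with $v_i\in Q^{(i)}$. Each $Q^{(i)}$ with retention probability $p$ is itself supercritical with a constant margin, as $pd'=\tfrac{(1+\epsilon)d'}{d}\ge 1+\tfrac{\epsilon}{2}$; moreover $Q^{(i)}[R\cap V(Q^{(i)})]$ is an induced subgraph of $Q^d[R]$, so each of its components lies inside a component of $Q^d[R]$ of at least the same size, and a neighbour of $v_i$ inside $Q^{(i)}$ is a neighbour of $v_i$ in $Q^d$. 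It therefore suffices to prove: for a fixed subcube $Q'$ of dimension $d'\ge(1-\epsilon/10)d$ and a fixed $v\in Q'$, the probability that $v$ has fewer than $\tfrac{\epsilon^2 d}{40}$ neighbours in the union $\mathcal{L}$ of the components of $Q'[R\cap V(Q')]$ of size $\ge n^{1-\epsilon}$ and their external neighbourhoods in $Q'$ is at most $\exp(-\tfrac{\epsilon^2 d}{40})$. As the relevant events for distinct $Q^{(i)}$ are independent, multiplying the $k$ estimates yields the lemma.

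\emph{A dense giant, and connecting to it.} Split $R\cap V(Q')=R_1\cup R_2$ into independent rounds, $R_1$ at rate $p_1$ just below $p$ (still supercritical with a constant margin) and $R_2$ at a small rate $q=\Theta(\epsilon/d)$. Exposing $R_1$, standard DFS-type estimates for supercritical percolation on the hypercube together with Claim~\ref{DFS-lemma} show that, outside an event whose probability is negligible compared with $\exp(-\tfrac{\epsilon^2 d}{40})$, $Q'[R_1]$ has a component $C$ with $|C|\ge n^{1-\epsilon}$ and $|N_{Q'}(C)|\ge\tfrac{9}{10}|C|d'\ge\epsilon\,|V(Q')|$; thus $M:=N_{Q'}(C)$ has density $\ge\epsilon$ in $Q'$ and is disjoint from $R_1$. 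By monotonicity the final large component $C'\supseteq C$ still satisfies $|C'|\ge n^{1-\epsilon}$ and $C\cup M\subseteq C'\cup N_{Q'}(C')$, so it suffices to produce $\ge\tfrac{\epsilon^2 d}{40}$ neighbours of $v$ inside $C\cup M$. A neighbour $v+e_j$ of $v$ lands in $C\cup M$ as soon as some vertex $v+e_j+e_{j'}$ at distance two from $v$ lies in $C$, so I would count the vertices of $v$'s radius-two sphere lying in $C$ (or merely in $M$) and the directions $j$ they cover. Using that $M$ has density $\ge\epsilon$ and, by Claim~\ref{DFS-lemma}, is spread out rather than concentrated, one gets that $\Omega(\epsilon d')$ of the $\binom{d'}{2}$ vertices at distance two from $v$ fall in $C\cup M$ and cover $\Omega(\epsilon d')\ge\tfrac{\epsilon^2 d}{40}$ distinct directions; where genuine independence is required I would bring in the reserved rate-$q$ round $R_2$ on the (only $\le d'^2$) vertices within distance two of $v$, which makes the relevant per-direction events into product events and lets a Chernoff bound close the gap with failure probability $\exp(-\Omega(\epsilon d))\le\exp(-\tfrac{\epsilon^2 d}{40})$.

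\emph{Main obstacle.} The heart of the matter is the concentration in the last step. Whether a neighbour of $v$ lies in $\mathcal{L}$ is essentially a function of the randomness on $Q'$ away from $v$, and these events for the different neighbours are positively correlated through the single random object $C$; a naive Chernoff bound does not apply, and McDiarmid's inequality is hopeless because $C$ depends on exponentially many bits while the deviation we need is only linear in $d$. The resolution is to argue directly that the giant meets $v$'s radius-two sphere both in many vertices and in many directions — an ``everywhere dense, even locally'' feature of the giant, which is exactly what the machinery of this section, powered by Claim~\ref{DFS-lemma}, is designed to provide — and, when a true independence is needed, to harvest it from a small fresh sprinkling round confined to the bounded neighbourhood of $v$.
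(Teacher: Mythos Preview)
Your reduction to a single-vertex problem via Claim~\ref{seperating} matches the paper exactly, but the second half has a genuine gap, and the paper's argument is different in a way that precisely circumvents it.

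The gap is the one you yourself name: you never actually establish that the giant $C$ (or $C\cup M$) meets the radius-$2$ sphere of the \emph{fixed} vertex $v$ in many directions with the required probability. Global density $|C\cup M|\ge\epsilon|V(Q')|$ together with vertex-transitivity gives, for each individual vertex $u$ at distance~$2$ from $v$, that $P(u\in C\cup M)\ge\epsilon-o(1)$, hence the expected intersection is $\Omega(\epsilon d'^2)$; but these indicators are all functions of the same random object $C$ and you provide no concentration. The sprinkling fix does not help: activating a vertex $u=v+e_j+e_{j'}$ in $R_2$ only puts $v+e_j$ into $N_{Q'}(C')$ if $u$ already lies in $M=N_{Q'}(C)$, so the per-direction ``product events'' you describe are Bernoulli with parameter depending on $|\{j':v+e_j+e_{j'}\in M\}|$, and to get $\Omega(\epsilon^2 d)$ successes at sprinkling rate $\Theta(\epsilon/d)$ you need $\Omega(d^2)$ such pairs in total spread over $\Omega(d)$ directions --- which is exactly the unproved local-density statement. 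A second, smaller issue: you assert that the event ``$Q'[R_1]$ has a dense giant'' fails with probability $o(\exp(-\epsilon^2 d/40))$, but Theorem~\ref{BKL} and Claim~\ref{DFS-lemma} as stated give only $1-o(1)$, not exponentially small failure.

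The paper avoids both problems by a second application of the disjoint-subcubes idea \emph{inside} $Q^{(i)}$: it carves out $\epsilon d/10$ pairwise disjoint subcubes $Q_i(1),\dots,Q_i(\epsilon d/10)$ of dimension $\ge(1-\epsilon/5)d$, each containing exactly one neighbour $v_i(j)$ of $v_i$. In each $Q_i(j)$ separately, the event $\mathcal{A}_{(i,j)}$ that its own giant plus neighbourhood has density $\ge 21\epsilon/20$ holds with probability $1-o(1)$, and then by transitivity of $Q_i(j)$ the single vertex $v_i(j)$ lies in that set with conditional probability $\ge 21\epsilon/20$; hence $P(\mathcal{A}_{(i,j)}\cap\mathcal{B}_{(i,j)})\ge\epsilon$. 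Since the $Q_i(j)$ are pairwise disjoint, these events are independent over $j$, and a genuine Chernoff bound gives $\ge\epsilon^2 d/40$ successes except with probability $\exp(-\epsilon^2 d/40)$. The point is that transitivity is applied to one vertex per subcube, so only the $\ge\epsilon$ marginal is needed and no local-density or strong-tail statement about the giant is ever required.
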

\begin{proof}
Let $S=\{v_1,\cdots, v_k\}$. By Claim \ref{seperating}, we can consider pairwise disjoint subcubes of dimension at least $\left(1-\frac{\epsilon }{10}\right)d$, each containing exactly one of the vertices of $S$. We denote these subcubes by $Q_1,\cdots, Q_k$, where $v_i\in V(Q_i)$. 

For each $i$, consider $Q_i$, and assume, without loss of generality, that $v_i$ is the origin of $Q_i$. We then create $\frac{\epsilon d}{10}$ pairwise disjoint subcubes of $Q_i$ of dimension at least $\left(1-\frac{\epsilon}{5}\right)d$, each at distance $1$ from $v_i$, by fixing one of the first $\frac{\epsilon d}{10}$ coordinates of $Q_i$ to be $1$, the rest of the first $\frac{\epsilon d}{10}$ coordinates to be $0$, and letting the other coordinates vary. Denote these subcubes by $Q_i(1), \cdots, Q_i\left(\frac{\epsilon d}{10}\right)$, and the vertex at distance $1$ from $v_i$ in $Q_i(j)$ by $v_i(j)$. We denote by $n'$ the order of each $Q_i(j)$, and note that $n'\ge2^{\left(1-\frac{\epsilon}{5}\right)d}=n^{1-\frac{\epsilon}{5}}$.

Fix $i$. Observe that $p$ is still supercritical at every $Q_i(j)$ since $(1+\epsilon)\left(1-\frac{\epsilon}{5}\right)\ge 1+\frac{3\epsilon}{5}$. Denote by $L_1(i,j)$ the largest component of $Q_i(j)[R]$. Then, by Theorem \ref{BKL}, \textbf{whp} $\big|L_1(i,j)\big|\ge \frac{7\epsilon n'}{6d}.$ Furthermore, by Claim \ref{DFS-lemma}, \textbf{whp}
$\bigg|N_{Q_i(j)}\left(L_1(i,j)\right)\bigg|\ge \frac{21\epsilon n'}{20}.$ Thus, setting $\mathcal{A}_{(i,j)}$ to be the event that $\bigg|L_1(i,j)\cup N_{Q_i(j)}\left(L_1(i,j)\right)\bigg|\ge\frac{21\epsilon n'}{20}$, we have that $P\left(\mathcal{A}_{(i,j)}\right)=1-o(1)$. Now, let $\mathcal{B}_{(i,j)}$ be the event that $v_i(j)\in L_1(i,j)\cup N_{Q_i(j)}\left(L_1(i,j)\right)$. Since the hypercube is transitive, we have that $P\left(\mathcal{B}_{(i,j)}|\mathcal{A}_{(i,j)}\right)\ge \frac{21\epsilon n'/20}{n'}=\frac{21\epsilon}{20}$. Therefore, $P\left(\mathcal{B}_{(i,j)}\cap \mathcal{A}_{(i,j)}\right)\ge \left(1-o(1)\right)\frac{21 \epsilon}{20}\ge \epsilon$. Since the subcubes $Q_i(j)$ are pairwise disjoint, the events $B_{(i,j)}\cap \mathcal{A}_{(i,j)}$ are independent for different $j$. Thus, by a typical Chernoff-type bound, with probability at least $1-\exp\left(-\frac{\epsilon^2d}{40}\right)$, at least $\frac{\epsilon^2d}{40}$ of the $v_i(j)$ are in $L_1(i,j)\cup N_{Q_i(j)}\left(L_1(i,j)\right)$ with $\bigg|L_1(i,j)\cup N_{Q_i(j)}\left(L_1(i,j)\right)\bigg|\ge\frac{21\epsilon n'}{20}$. Thus, with the same probability, $v_i$ is at distance $1$ from at least $\frac{\epsilon^2d}{40}$ vertices in components whose size is at least $\frac{\epsilon n'}{d}\ge \frac{\epsilon n^{1-\frac{\epsilon}{5}}}{d}\ge n^{1-\epsilon}$ or in their neighbourhoods.

Since $v_i$'s are in pairwise disjoint subcubes, these events are also independent for each $v_i$. Hence, the probability that none of the $v_i$ are at distance $1$ from at least $\frac{\epsilon^2d}{40}$ vertices in components whose size is at least $n^{1-\epsilon}$ or in their neighbourhoods is at most $\exp\left(-\frac{\epsilon^2dk}{40}\right)$.
\end{proof}

\section{Unlikely structures in the percolated hypercube}

Denote by $N^2_G(v)$ the set of vertices in $G$ at distance exactly $2$ from $v$. The following lemma shows that, typically, there are no large sections of a sphere of radius $2$ in $Q^d[R]$.
\begin{lemma}\label{no-sphere}
\textbf{Whp}, there is no $v\in Q^d$ such that $\big|N^2_{Q^d}(v)\cap R\big|\ge 2d$.
\end{lemma}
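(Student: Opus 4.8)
The plan is a direct first-moment estimate followed by a union bound over the $n=2^d$ vertices of $Q^d$. Fix $v\in V(Q^d)$. The sphere $N^2_{Q^d}(v)$ consists of exactly $\binom{d}{2}$ vertices, each lying in $R$ independently with probability $p=\frac{1+\epsilon}{d}$, so $\big|N^2_{Q^d}(v)\cap R\big|$ is distributed as $\mathrm{Bin}\!\left(\binom{d}{2},p\right)$, with mean $\binom{d}{2}p\le \frac{(1+\epsilon)d}{2}$. Since $2d$ exceeds this mean by a factor close to $4$, a Chernoff-type bound (as in Appendix A of \cite{probablistic method}) gives
\[
P\!\left[\big|N^2_{Q^d}(v)\cap R\big|\ge 2d\right]\le P\!\left[\mathrm{Bin}\!\left(\tbinom{d}{2},p\right)\ge 2d\right]\le \exp(-cd)
\]
for some absolute constant $c>\ln 2$ (using that $\epsilon$ is small). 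Concretely, even the crude bound $P[\mathrm{Bin}(N,p)\ge k]\le \binom{N}{k}p^k\le \left(\frac{eNp}{k}\right)^k$, applied with $N=\binom{d}{2}$ and $k=2d$, already yields a bound of the form $\left(\frac{e(1+\epsilon)}{4}\right)^{2d}$, and $\frac{e(1+\epsilon)}{4}<2^{-1/2}$ once $\epsilon$ is small enough.

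Then a union bound over the $n=2^d$ choices of $v$ gives
\[
P\!\left[\exists v\colon \big|N^2_{Q^d}(v)\cap R\big|\ge 2d\right]\le 2^d\exp(-cd)=\exp\big(-(c-\ln 2)d\big)=o(1),
\]
which proves the lemma.

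There is no real structural obstacle here; the only point to watch is the bookkeeping of constants. The threshold $2d$ must be a large enough multiple of the mean $\Theta(d)$ of $\big|N^2_{Q^d}(v)\cap R\big|$ so that the exponential decay $\exp(-cd)$ of the single-vertex tail probability overcomes the factor $2^d=\exp(d\ln 2)$ from the union bound over all vertices. Choosing $\epsilon$ sufficiently small (or, equivalently, noting that $2d$ is comfortably more than, say, three times the mean) makes the inequality $c>\ln 2$ hold with room to spare.
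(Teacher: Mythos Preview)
Your proof is correct and essentially identical to the paper's: both fix $v$, bound $P\bigl[\mathrm{Bin}\bigl(\binom{d}{2},p\bigr)\ge 2d\bigr]$ by $\binom{\binom{d}{2}}{2d}p^{2d}\le\left(\frac{e(1+\epsilon)}{4}\right)^{2d}$, and then union bound over the $2^d$ vertices to get $2^d\left(\frac{e(1+\epsilon)}{4}\right)^{2d}=o(1)$ for small $\epsilon$. The only cosmetic difference is that the paper writes the final quantity as $(14/15)^d$ rather than invoking $\frac{e(1+\epsilon)}{4}<2^{-1/2}$.
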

\begin{proof}
We have $n$ ways to choose $v$, and $\binom{\binom{d}{2}}{2d}$ ways to choose a subset of $2d$ vertices from $N^2_{Q^d}(v)$. We include them in $R$ with probability at most $p^{2d}$. Hence, by the union bound, the probability of violating the lemma is at most:
\begin{align*}
    2^d\binom{\binom{d}{2}}{2d}\left(\frac{1+\epsilon}{d}\right)^{2d}\le 2^d\left(\frac{ed}{4}\cdot \frac{1+\epsilon}{d}\right)^{2d}\le \left(\frac{14}{15}\right)^{d}=o(1).
\end{align*}
\end{proof}

From Lemma \ref{no-sphere}, we are able to derive the following lemma:
\begin{lemma} \label{big-neighbourhood}
\textbf{Whp} there are no $S\subseteq R$ and $W\subseteq V(Q^d)$ disjoint from $S$ such that $|W|\le\frac{\epsilon^4d|S|}{9\cdot 200^2}$ and every $v\in S$ has $d_{Q^d}(v, W)\ge \frac{\epsilon^2d}{200}$.
\end{lemma}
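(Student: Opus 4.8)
The plan is to reduce the statement to Lemma~\ref{dense-giant-2} by contraposition. Suppose such $S \subseteq R$ and $W$ exist with $|W| \le \frac{\epsilon^4 d |S|}{9 \cdot 200^2}$ and $d_{Q^d}(v,W) \ge \frac{\epsilon^2 d}{200}$ for every $v \in S$. The quantity $\frac{\epsilon^2 d}{200}$ should be recognizable as (roughly) half of $\frac{\epsilon^2 d}{40}$ up to constants, and $|W|$ is small compared to $|S|$ times $\frac{\epsilon^2 d}{200}$, so $W$ cannot account for many of the ``large-component-or-neighbourhood'' vertices guaranteed by Lemma~\ref{dense-giant-2}. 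The first step is to set up the right averaging: I would restrict attention to a sub-collection of the vertices of $S$ that are pairwise far apart in $Q^d$, so that the events supplied by Lemma~\ref{dense-giant-2} (applied to a set of size $k \le \frac{\epsilon d}{10}$) are genuinely usable, and iterate this in disjoint blocks to cover all of $S$.

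Concretely, I would partition (greedily) $S$ into groups $S_1, S_2, \dots$, each of size at most $\frac{\epsilon d}{10}$, and within each group further pass to a subset that is ``spread out'' enough — or, more simply, just apply Lemma~\ref{dense-giant-2} directly to each group $S_i$ (the lemma as stated only needs $|S_i| \le \frac{\epsilon d}{10}$, with no spreading hypothesis, since the disjoint subcubes are constructed from Claim~\ref{seperating}). The lemma tells us that, except with probability $\exp(-\frac{\epsilon^2 d |S_i|}{40})$, \emph{some} $v \in S_i$ has at least $\frac{\epsilon^2 d}{40}$ neighbours in $Q^d$ lying in large components (size $\ge n^{1-\epsilon}$) or their $Q^d$-neighbourhoods. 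Taking a union bound over all subsets $S$ of a given size and all partitions — the total count is at most $2^{O(|S| \log d)}$ or so — against the failure probability $\exp(-\Theta(\epsilon^2 d |S|))$, we get that \textbf{whp} every such $S$ with $|S|$ not too small has the property that for each group $S_i$ there is a vertex $v_i \in S_i$ with many ($\ge \frac{\epsilon^2 d}{40}$) neighbours in large components or their neighbourhoods. For small $|S|$, a separate (even cruder) first-moment bound handles the finitely many cases, or one observes the statement is vacuous/trivial there.

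Now I would use these vertices $v_i$ to contradict the hypothesis on $W$. Each $v_i$ has $\ge \frac{\epsilon^2 d}{40}$ neighbours that lie either in a large component of $Q^d[R]$ or in the $Q^d$-neighbourhood of such a component. A neighbour of $v_i$ that lies in the neighbourhood of a large component is at distance $1$ from that component; a neighbour in the component itself is at distance $0$. In either case, walking one more step, $v_i$ itself is within distance $2$ of a vertex of a large component. The key point is that large components are not in $W$ (since $|W|$ is tiny, far smaller than $n^{1-\epsilon}$), so... actually the cleaner route: since $v_i \in S_i \subseteq S$, the hypothesis forces $d_{Q^d}(v_i, W) \ge \frac{\epsilon^2 d}{200}$, so at most $O(1)$ of the $\ge \frac{\epsilon^2 d}{40}$ relevant neighbours of $v_i$ can be in $W$ — essentially none, since they are at distance $1$ from $v_i$ while $W$ is at distance $\ge \frac{\epsilon^2 d}{200} > 1$. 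Hence each $v_i$ has $\ge \frac{\epsilon^2 d}{40}$ neighbours in $(\text{large components}) \cup (\text{their neighbourhoods}) \setminus W$. Summing the counts $\frac{\epsilon^2 d}{40} \cdot (\text{number of groups}) \ge \frac{\epsilon^2 d}{40} \cdot \frac{|S|}{\epsilon d/10} = \frac{\epsilon d}{4} \cdot \frac{|S|}{?}$ against an upper bound derived from $|W|$ will yield the contradiction $|W| \ge \frac{\epsilon^4 d |S|}{9\cdot 200^2}$; getting the constants to line up is where I expect the bookkeeping to be delicate, and I would choose the grouping and the ``far from $W$'' threshold precisely to make Lemma~\ref{no-sphere} (bounding how many $W$-far vertices can share a common distance-$2$ target) feed in cleanly. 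The main obstacle is exactly this constant-chasing: ensuring that the $\frac{\epsilon^2 d}{200}$ gap is enough that the ``large component'' witnesses for distinct $v_i$ can be charged to distinct — or boundedly-overlapping — vertices of $W$, which is where Lemma~\ref{no-sphere} (no vertex has $\ge 2d$ elements of $R$ in its radius-$2$ sphere) does the real work of controlling the overlap.
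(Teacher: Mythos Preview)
You have misread the statement: here $d_{Q^d}(v,W)$ denotes the \emph{number of neighbours} of $v$ in $W$ (the degree of $v$ into $W$), not the graph distance from $v$ to $W$. This is clear both from the paper's own proof (``there are at least $\frac{\epsilon^2 d|S|}{200}$ edges between $S$ and $W$'') and from the application in Section~4, where the hypothesis ``every $v\in M$ has at least $\frac{\epsilon^2d}{200}$ neighbours in $T$'' is exactly what feeds into the lemma. Your line ``$W$ is at distance $\ge \frac{\epsilon^2 d}{200}>1$'' shows you are working with the wrong hypothesis, and the rest of the argument is built on this misreading.

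Independently of that, the proposed route through Lemma~\ref{dense-giant-2} is off target. The lemma has nothing to do with large components; $W$ is an arbitrary set of vertices, and the content is purely a pigeonhole/counting statement: if a small $W$ absorbs many edges from $S\subseteq R$, then $S$ must contain a vertex with too many second neighbours in $R$. Your union bound over ``all subsets $S$ of a given size'' would also fail, since $S$ is not assumed connected and there are $\binom{n}{|S|}\approx 2^{d|S|}$ such subsets, swamping the $\exp(-\Theta(\epsilon^2 d|S|))$ bound. The paper's actual proof is a three-line cherry count: the $\ge \frac{\epsilon^2 d|S|}{200}$ edges from $S$ to $W$ force, by Jensen, at least $\frac{\epsilon^4 d^2|S|^2}{4\cdot 200^2|W|}\ge \frac{9d|S|}{4}$ cherries with centre in $W$ and endpoints in $S$; some $v\in S$ lies in $\ge \frac{9d}{2}$ of them, hence (since any two vertices of $Q^d$ share at most two common neighbours) $v$ has more than $2d$ second neighbours in $S\subseteq R$, contradicting Lemma~\ref{no-sphere}.
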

\begin{proof}
Assume the contrary. By our assumption on $S$, there are at least $\frac{\epsilon^2 d|S|}{200}$ edges between $S$ and $W$. Thus, the average degree from $W$ to $S$ is at least $\frac{\epsilon^2 d|S|}{200|W|}$. Now, let us count the number of cherries with the vertex of degree $2$ in $W$. By Jensen's inequality, we have at least $\binom{\epsilon^2 d|S|/200|W|}{2}|W|\ge \frac{\epsilon^4d^2|S|^2}{4\cdot 200^2|W|}\ge \frac{9d|S|}{4}$ such cherries, where we used our assumption on $|W|$. Thus, by the pigeonhole principle, there is a vertex $v\in S$ that is in $\frac{2}{|S|}\cdot\frac{9d|S|}{4}=\frac{9d}{2}$ cherries. Since every pair of vertices in $S$ is connected by at most two paths of length $2$ in $Q^d$, we obtain that $v$ is at Hamming distance $2$ from at least $\frac{\frac{9d}{2}}{2}=\frac{9d}{4}>2d$ vertices in $S\subseteq R$. On the other hand, by Lemma \ref{no-sphere}, \textbf{whp} there is no $v\in Q^d$ such that $|N_{Q^d}^2(v)\cap R|\ge 2d$, completing the proof.
\end{proof}

The next lemma bounds the number of subtrees of a given order in a $d$-regular graph.
\begin{lemma} \label{trees}
\textit{Let $t_k(G)$ denote the number of $k$-vertex trees contained in a $d$-regular graph $G$ on $n$ vertices. Then,
$$t_k(G)\le n(ed)^{k-1}.$$}
\end{lemma}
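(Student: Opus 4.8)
The plan is to bound $t_k(G)$ by encoding each $k$-vertex subtree of $G$ via a depth-first (or breadth-first) exploration that, at each step, either moves to a new vertex or backtracks, and to count the number of such encodings. First I would fix a spanning tree structure: given a $k$-vertex tree $T \subseteq G$, root it at an arbitrary vertex and consider the standard DFS traversal of $T$. This traversal visits each of the $k-1$ edges exactly twice (once going down, once backtracking), so it is a walk of length $2(k-1)$ in $T$. The walk is determined by recording, at each of the $2(k-1)$ steps, which neighbour in $G$ we move to; since $G$ is $d$-regular there are at most $d$ choices at each step. Together with the $n$ choices for the root, this gives the crude bound $t_k(G) \le n \cdot d^{2(k-1)}$, which is too weak by roughly a factor $d^{k-1}$.

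To get the sharper bound $n(ed)^{k-1}$ I would refine the count: instead of recording a neighbour at every one of the $2(k-1)$ steps, note that the backtracking steps carry no information — once we know the sequence of "down" moves and the positions in the traversal at which we backtrack, the walk (hence $T$) is reconstructed. So an encoding consists of: the root ($n$ choices); the shape of the traversal, i.e. the sequence of $k-1$ "down" symbols and $k-1$ "up" symbols, which is a lattice path counted by the Catalan-type bound $\binom{2(k-1)}{k-1} \le 4^{k-1}$; and, for each of the $k-1$ "down" steps, the choice of which of the $\le d$ neighbours we descend to, giving $d^{k-1}$. This yields $t_k(G) \le n \cdot 4^{k-1} \cdot d^{k-1}$, which is worse than claimed because $4 > e$. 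The fix is to observe that the $k-1$ down-moves out of a given vertex of degree $d$ in $G$ must go to $k-1$ \emph{distinct} among its $d$ neighbours and, more importantly, that across the whole traversal we are choosing an injERROR-free labelling; a cleaner route is to bound directly the number of rooted labelled trees times embeddings: the number of rooted trees on vertex set $[k]$ with a prescribed out-degree sequence is controlled, and Cayley's formula $k^{k-2}$ combined with $k^{k-2} \le e^{k-1} (k-1)!/\text{something}$ is not quite it either.

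The genuinely clean argument, which I would adopt, is the following greedy/sequential one. Build the tree vertex by vertex in a BFS-type order $u_1, u_2, \ldots, u_k$. There are $n$ choices for $u_1$. Having chosen $u_1, \ldots, u_i$ spanning a subtree, the next vertex $u_{i+1}$ is a neighbour in $G$ of one of the already-chosen vertices, and crucially it is specified by naming its parent among $u_1,\dots,u_i$ together with which of that parent's $\le d$ edges leads to it; but to avoid the factor $i$ from the parent choice one instead fixes the parent structure abstractly first: the number of trees on $[k]$ rooted at $1$ where $u_{i+1}$'s parent is among $\{u_1,\dots,u_i\}$ is at most $(k-1)!$-free, and the standard estimate is that the number of ways to grow the tree is $\prod_{i=1}^{k-1}(\text{parent choices}) \le$ (number of rooted forests on $[k]$) which by a classical bound is at most $k^{k-1}/(k-1)! \le e^{k-1}$ — combining this with the $d^{k-1}$ edge-choices and the $n$ root choices gives exactly $t_k(G) \le n (ed)^{k-1}$.

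The main obstacle is purely the bookkeeping needed to separate the "which parent" choice (which contributes at most $e^{k-1}$ in aggregate, not $k^{k-1}$) from the "which edge at that parent" choice (which contributes $d^{k-1}$), without double counting. Concretely, I expect to invoke the fact that the number of sequences $(a_2,\dots,a_k)$ with $1 \le a_i \le i-1$ — which is $(k-1)!$ — overcounts rooted labelled trees, while the number of \emph{unordered} rooted trees on $[k]$ is $k^{k-2}$, and one must choose the encoding so that the aggregate combinatorial factor is $\le e^{k-1}$; the slick way is to encode the tree by its Prüfer-type sequence or, most simply, to note $k^{k-2} \le \frac{k^{k-1}}{(k-1)!} (k-1)! / k \le \ldots$, but cleanest is to grow the tree in a canonical BFS order so that at step $i$ the number of choices for the edge-from-parent is at most $d$ and the number of choices for which already-present vertex is the parent, \emph{summed multiplicatively with the right normalization}, telescopes to at most $e^{k-1}$. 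Once that normalization is pinned down the inequality follows immediately, and the $d$-regularity of $G$ is used only to cap each edge-choice by $d$.
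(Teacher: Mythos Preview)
The paper does not prove this lemma at all: it simply cites Lemma~2.1 of Beveridge--Frieze--McDiarmid \cite{trees}. So there is no ``paper's approach'' to compare against; your attempt to prove the bound from scratch is already more than the paper does.

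That said, your write-up is not a proof but a sequence of sketches, and the final one contains concrete errors. You write that ``the number of unordered rooted trees on $[k]$ is $k^{k-2}$''; in fact $k^{k-2}$ counts \emph{unrooted} labelled trees (Cayley), while rooted labelled trees number $k^{k-1}$. More seriously, the inequality you state, $k^{k-1}/(k-1)! \le e^{k-1}$, is false: by Stirling the left side is asymptotically $e^{k}/\sqrt{2\pi k}$, a factor of roughly $e$ too large. The clean argument you are circling is this: the number of pairs (labelled tree $T$ on $[k]$, injective homomorphism $T\hookrightarrow G$) is at most $k^{k-2}\cdot n d^{k-1}$, since there are $k^{k-2}$ labelled trees and for each one you embed vertex $1$ in $n$ ways and then each further vertex, in BFS order from its parent, in at most $d$ ways. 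Every $k$-vertex subtree of $G$ arises from exactly $k!$ such pairs (one per bijection $[k]\to V(H)$), so
\[
t_k(G)\le \frac{k^{k-2}}{k!}\,n d^{k-1}\le \frac{e^{k}}{k^{2}}\,n d^{k-1}\le n(ed)^{k-1}\quad(k\ge 2),
\]
using $k!\ge (k/e)^k$. The $k=1$ case is trivial. This is essentially the Beveridge--Frieze--McDiarmid proof, and is what your last paragraph is reaching for; the fix is to divide by $k!$ rather than $(k-1)!$ and to use the unrooted count $k^{k-2}$ rather than $k^{k-1}$.
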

This follows directly from Lemma 2.1 of \cite{trees}.

We are now ready to state and prove the final lemma of this section:
\begin{lemma}\label{no-squid}
Let $C>0$ be a constant. Then, \textbf{whp}, there is no $S\subseteq V(Q^d)$, such that $|S|\le Cd$, $S$ is connected in $Q^d$, and at least $\frac{\epsilon d}{10}$ vertices $v\in S$ have that $\big|N_{L_1\cup N_{Q^d}(L_1)}(v)\big|<\frac{\epsilon^2d}{40}$.
\end{lemma}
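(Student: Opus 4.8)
# Proof Proposal for Lemma \ref{no-squid}

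The plan is to fix a constant $C > 0$ and bound the probability that such a "squid" $S$ exists by a union bound over all candidates, where the main savings come from Lemma \ref{dense-giant-2}. First I would specify what data determines such an $S$: a connected set $S$ with $|S| \le Cd$, and within it a distinguished subset $T \subseteq S$ with $|T| = \frac{\epsilon d}{10}$ consisting of vertices $v$ with $|N_{L_1 \cup N_{Q^d}(L_1)}(v)| < \frac{\epsilon^2 d}{40}$. To count candidates, I would first pick a spanning tree of $S$ (or embed $S$ in a minimal connected set); by Lemma \ref{trees} the number of connected sets of size at most $Cd$ in $Q^d$ is at most $\sum_{k \le Cd} n (ed)^{k-1} \le n (ed)^{Cd}$. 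Having fixed $S$, the number of ways to choose $T \subseteq S$ is at most $2^{|S|} \le 2^{Cd}$. So the total number of (connected set, bad subset) pairs is at most $n \cdot (ed)^{Cd} \cdot 2^{Cd} = e^{O_{\epsilon}(d \ln d)}$.

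Next I would apply Lemma \ref{dense-giant-2} to the fixed set $T$. The lemma requires $|T| \le \frac{\epsilon d}{10}$, which holds with equality, and it tells us that the probability that \emph{every} $v \in T$ has fewer than $\frac{\epsilon^2 d}{40}$ neighbours in components of $Q^d[R]$ of size at least $n^{1-\epsilon}$ or in their neighbourhoods is at most $\exp\left(-\frac{\epsilon^2 d \cdot |T|}{40}\right) = \exp\left(-\frac{\epsilon^3 d^2}{400}\right)$. The small remaining gap to close is that Lemma \ref{dense-giant-2} speaks of \emph{all} large components (size $\ge n^{1-\epsilon}$) and their neighbourhoods, whereas the squid definition speaks only of $L_1$ and $N_{Q^d}(L_1)$. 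But by Theorem \ref{BKL}, \textbf{whp} the giant $L_1$ is the \emph{unique} component of size at least $n^{1-\epsilon}$ (all others have size at most $d^{10} \ll n^{1-\epsilon}$); hence on this \textbf{whp} event, "neighbours in a large component or its neighbourhood" coincides with "neighbours in $L_1 \cup N_{Q^d}(L_1)$", so the event in Lemma \ref{dense-giant-2} for $T$ contains precisely the bad event defining the squid on $T$.

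Combining, the probability that some squid $S$ exists is at most
\begin{align*}
o(1) + n (ed)^{Cd} 2^{Cd} \exp\left(-\frac{\epsilon^3 d^2}{400}\right),
\end{align*}
where the $o(1)$ absorbs the probability that $L_1$ is not the unique large component. Since $n (ed)^{Cd} 2^{Cd} = \exp\left(O_{\epsilon}(d \ln d)\right)$ while the decaying factor is $\exp\left(-\Theta_{\epsilon}(d^2)\right)$, the product is $o(1)$, which completes the proof. The main obstacle I anticipate is bookkeeping the independence and the identification of events: one must be careful that the bad event in the squid definition (vertices having few neighbours in $L_1 \cup N_{Q^d}(L_1)$) genuinely implies the hypothesis of Lemma \ref{dense-giant-2} for the same vertex set $T$, which is exactly where the uniqueness of the giant from Theorem \ref{BKL} is used; the counting of connected sets via Lemma \ref{trees} and the union bound are then routine.
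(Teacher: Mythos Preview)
Your proposal is correct and follows essentially the same approach as the paper: a union bound over connected sets (counted via Lemma~\ref{trees}) and over choices of the bad subset $T$, with the probability savings coming from Lemma~\ref{dense-giant-2}, and with Theorem~\ref{BKL} used to identify ``large components'' with $L_1$. The only cosmetic differences are that the paper restricts to $|S|=Cd$ (extending smaller sets via connectivity of $Q^d$) and uses $\binom{Cd}{\epsilon d/10}$ rather than $2^{Cd}$ for the choice of $T$; your exponent $\epsilon^3 d^2/400$ is in fact the correct application of Lemma~\ref{dense-giant-2} (the paper's $\epsilon^4$ appears to be a harmless typo).
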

\begin{proof}
By Theorem \ref{BKL}, \textbf{whp} there is a unique giant component whose size is larger than $d^{10}$, which we denote by $L_1$. Thus, it suffices to show that \textbf{whp} there is no such $S$, where $\frac{\epsilon d}{10}$ of its vertices have less than $\frac{\epsilon^2d}{40}$ vertices at Hamming distance $1$ from components of size at least $n^{1-\epsilon}$ or their neighbourhoods, since \textbf{whp} these components and their neighbourhoods are in fact $L_1\cup N_{Q^d}(L_1)$. 

Since $Q^d$ is connected, we can consider connected sets of size exactly $Cd$. By Lemma \ref{trees}, we have $n(ed)^{Cd}$ ways to choose $S$. We have $\binom{Cd}{\frac{\epsilon d}{10}}$ ways to choose the vertices in $S$ which do not have at least $\frac{\epsilon^2d}{40}$ vertices at Hamming distance $1$ from components of size at least $n^{1-\epsilon}$ or their neighbourhoods. By Lemma \ref{dense-giant-2}, the probability that no vertex in a given set of $\frac{\epsilon d}{10}$ vertices in $S$ has at least $\frac{\epsilon^2d}{40}$ vertices at Hamming distance $1$ from components of size at least $n^{1-\epsilon}$ or their neighbourhoods is at most $\exp\left(-\frac{\epsilon^4d^2}{400}\right)$. Hence, the probability of the event violating the statement of the lemma is at most:
\begin{align*}
    n(ed)^{Cd}\binom{Cd}{\frac{\epsilon d}{10}}\exp\left(-\frac{\epsilon^4d^2}{400}\right)\le n\left((ed)^C\left(\frac{10eC}{\epsilon}\right)^{\frac{\epsilon}{10}}\exp\left(-\frac{\epsilon^4d}{400}\right)\right)^d=o(1).
\end{align*} 
\end{proof}

\section{Proof of Theorem \ref{main-theorem}}
Let $p_1=\frac{1+\epsilon/2}{d}$. Form $R_1$ by including each vertex $v\in V(Q^d)$ independently with probability $p_1$. By Theorem \ref{BKL}, \textbf{whp} there is a unique giant component, denote it by $L_1'$. We can thus split the vertices of the hypercube into the following three disjoint sets: $T=L_1'\cup N_{Q^d}(L_1')$, $M$ is the set of vertices outside $T$ with at least $\frac{\epsilon^2d}{200}$ neighbours in $T$, and $S=V\left(Q^d\right)\setminus\left(T\cup M\right)$.

Let $p_2=\frac{\epsilon}{2d-2-\epsilon}$. Form $R_2$ by including each vertex $v\in V(Q^d)$ independently with probability $p_2$. Note that since $1-p=(1-p_1)(1-p_2)$, the random set $R$ has the same distribution as $R_1\cup R_2$. Thus, with a slight abuse of notation, we write $R=R_1\cup R_2$. We begin by performing the second exposure (i.e. by generating $R_2$) on $S\cup M$, and only afterwards on $T$. Note that once we show that a connected set has a neighbour in $T\cap R_2$, it means that it merges into $L_1'$ (whereas by Theorem \ref{BKL}, \textbf{whp} $L_1'\subseteq L_1$).

Let $C_1$ be a positive constant, possibly depending on $\epsilon$. We first show that \textbf{whp} if there is a connected component $B$ in $(S\cup M)\cap R$ such that $\big|B\cap M\big|\ge C_1d$, it merges with $L_1'$ after the second exposure on $T$. By construction, every $v\in M$ has at least $\frac{\epsilon^2d}{200}$ neighbours in $T$, and $T\cap M=\emptyset$. Thus, by Lemma \ref{big-neighbourhood}, we have that \textbf{whp} $\big|N_T(B\cap M)\big|\ge \frac{C_1\epsilon^4d^2}{9\cdot 200^2}$. Hence,
\begin{align*}
    P\left[\big|N_T(B)\cap R_2\big|=0\right]&\le \left(1-\frac{\epsilon}{2d}\right)^{\frac{C_1\epsilon^4d^2}{9\cdot200^2}}\le \exp\left(-\frac{C_1\epsilon^5d}{18\cdot200^2}\right)=o(1/n),
\end{align*}
by choosing $C_1\ge\frac{18\cdot200^2}{\epsilon^5}$. We conclude with the union bound over the $<n$ connected components in $(S\cup M)\cap R$.

Thus, we are left to deal with connected components $B\subseteq(S\cup M)\cap R$ of size at least $2C_1d$ with less than $C_1d$ vertices in $B\cap M$. Taking a connected subset $B'\subseteq B$ of size exactly $Cd:=2C_1d$, we have that $|B'\cap S|\ge |B'|-|B'\cap M|\ge C_1d\ge \frac{\epsilon d}{20}$. But by Lemma \ref{no-squid}, \textbf{whp} there is no connected set of size $Cd$ in $Q^d$ with at least $\frac{\frac{\epsilon}{2} d}{10}=\frac{\epsilon d}{20}$ vertices which do not have at least $\frac{\left(\frac{\epsilon}{2}\right)^2d}{40}\ge \frac{\epsilon^2d}{200}$ vertices at Hamming distance $1$ from $L_1'\cup N_{Q^d}(L_1')$. Recalling that $S, M,T$ and $L_1'$ were formed according to the supercritical percolation with $p_1=\frac{1+\epsilon/2}{d}$, this means that \textbf{whp} there is no such connected set $B'$. All in all, \textbf{whp}, there is no connected component $B$ in $Q^d[R]$ of size at least $Cd$ outside of $L_1$. \qedsymbol

\paragraph{Acknowledgements.} The first author wishes to thank Arnon Chor and Dor Elboim for fruitful discussions.

\end{document}